\newcommand\textcyr[1]{{\fontencoding{OT2}\selectfont #1}}
\numberwithin{equation}{part}
\theoremstyle{plain}
\newtheorem{theorem}{Theorem}[part]
\newtheorem{lemma}[theorem]{Lemma}
\newtheorem{proposition}[theorem]{Proposition}
\theoremstyle{definition}
\newtheorem{definition}[theorem]{Definition}
\newtheorem{remark}[theorem]{Remark}
\newcommand{\ZZ}{\mathbb{Z}}
\newcommand{\Tr}{\mathrm{Trace}}
\title[Markov's conjecture on integral necklaces]{Markov's conjecture on integral necklaces}
\author{David Fisac}
\address{David Fisac, Departament de Matem\`atiques, Universitat Aut\`onoma de Barcelona, Bellaterra, Spain.}
\email{david.fisac@uab.cat}
\keywords{}
\subjclass{57K20, 11J06}
\thanks{The author is supported by the Luxembourg National Research Fund PRIDE17/1224660/GPS, the FEDER/AEI/MICINN grant PID2021-125625NB-I00 \& the AGAUR grant 2021-SGR-01015
}
\begin{document}
\setstretch{1.15}

\begin{abstract}
 We use the geometric reformulation of Markov's uniqueness conjecture in terms of the simple length spectrum of the modular torus to rewrite the conjecture in combinatorial terms by explicitly describing this set of lengths.\vspace{-30pt}
\end{abstract}

\maketitle

\part{Result}
\vspace{-10pt}
Denote by $\mathcal{N}$ the set of primitive necklaces of positive integers with small variation (i.e. cyclic classes of finite sequences of positive integers such that the sum of any two blocks of the same size has difference at most $1$) and the trivial necklace $[0]$, see Definition \ref{def:smvar}. Primitivity refers to not being a power by concatenation of a smaller sequence.

Define the following function on this set of necklaces. 
\begin{align} \label{eq:Phi}
\begin{split}
\Phi:\mathcal{N}&\to \mathbb{Z}_{>0}\\
[n_1,\hdots, n_k]&\mapsto\frac{1}{10^k\cdot 2^{n_1+\cdots +n_k}}\sum_{S\subseteq\{1,\hdots,k\}}3^{r(S)-1}2^{k-r(S)}(\xi+2)^{|S|}(\overline\xi+2)^{|S^c|}\xi^{\sum_{i\in S}n_i}\overline\xi^{\sum_{i\in S^c}n_i},
\end{split}
\end{align}
for $\xi=3+\sqrt{5}$ and $\overline\xi =3-\sqrt{5}$, where $r(S)=\sum_{s\in\{\text{runs of }S\text{ and }S^c\}}|s|-1$ if $S, S^c\neq \{1,\hdots, k\}$, and $r(\{1,\hdots, k\})=r(\emptyset)=k$. A \emph{run} $s$ of $S\subseteq \{1,\hdots, k\}$ is a maximal subset $s\subseteq S$ such that the numbers are (cyclically) consecutive.

We prove an equivalent statement to Markov's uniqueness conjecture as follows. 

\begin{theorem}\label{mainthm} Markov's uniqueness conjecture is equivalent to 

\begin{center}\textit{The function $\Phi$ is injective in $\mathcal{N}$.}\end{center}
Moreover, the set of Markov numbers is exactly $\mathrm{Im}(\Phi)$.
\end{theorem}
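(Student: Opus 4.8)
\emph{Proof plan.}
The plan is to realize $\mathcal{N}$ as a combinatorial model for the vertex set of the Markov tree, and to identify $\Phi$ with the function sending a vertex to the largest entry of the corresponding Markov triple. Markov's uniqueness conjecture says exactly that each Markov number is the largest entry of a unique Markov triple, i.e.\ that this largest-entry function is injective on the (abstract) vertex set, its range being the set of Markov numbers by definition; in the geometric reformulation one replaces a Markov triple by the corresponding simple closed geodesic on the modular torus $M$, via the relation $\Tr=3m$ on the uniformizing Fuchsian group. So the theorem reduces to two assertions: \textbf{(A)} there is a bijection $\beta$ from $\mathcal{N}$ onto the vertex set of the Markov tree; and \textbf{(B)} $\Phi(\nu)$ equals the largest entry of $\beta(\nu)$ for every $\nu\in\mathcal{N}$.

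\emph{Step A (the combinatorial model).} Fix the Cohn generators $X=\bigl(\begin{smallmatrix}2&1\\1&1\end{smallmatrix}\bigr)$, $Y=\bigl(\begin{smallmatrix}1&1\\1&2\end{smallmatrix}\bigr)$ of the Fuchsian group $\Gamma'$ uniformizing $M$, so that $\Tr[X,Y]=-2$. I would recall, and adapt to this setting, the classical correspondence between primitive simple closed curves on $M$ (up to conjugacy, inversion, and the finite isometry group of $M$), primitive balanced (Sturmian) positive words in $X,Y$, and slopes $p/q\in\mathbb{Q}\cup\{\infty\}$. A balanced positive word with at most as many $X$'s as $Y$'s has every letter $X$ isolated, hence is cyclically $W_{[n_1,\dots,n_k]}:=XY^{n_1}XY^{n_2}\cdots XY^{n_k}$ with $k\ge1$, $n_i\ge1$; by the standard relation between a Sturmian word and the sequence of its run lengths, such a word is balanced precisely when the cyclic integer sequence $(n_1,\dots,n_k)$ is balanced, i.e.\ has small variation in the sense of Definition~\ref{def:smvar}, and it is primitive precisely when the necklace is. Adjoining the symbol $[0]$ (standing for $X$ alone, slope $\infty$, carrying Markov number $1$), one obtains a bijection $\beta:\mathcal{N}\to\{\text{vertices of the Markov tree}\}$ for which the largest entry of $\beta(\nu)$ equals the Markov number of the geodesic represented by $W_\nu$ (Markov--Cohn); the natural $\beta$ identifies the Stern--Brocot structure on $\mathcal{N}$ with the mutation tree.

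\emph{Step B (the trace identity).} It then suffices to prove $\Tr W_{[n_1,\dots,n_k]}=3\,\Phi([n_1,\dots,n_k])$: granting this, $\Phi(\nu)=\tfrac13\Tr W_\nu$ is, by Step A, the largest entry of $\beta(\nu)$, so that $\mathrm{Im}(\Phi)$ is the set of Markov numbers and $\Phi$ is injective if and only if distinct vertices of the Markov tree carry distinct largest entries, i.e.\ if and only if Markov's uniqueness conjecture holds. To get the identity, diagonalize $Y=V\,\mathrm{diag}(\xi/2,\overline\xi/2)\,V^{-1}$ --- note $\xi/2,\overline\xi/2$ are the eigenvalues of $Y$, and, with $\varphi=\tfrac{1+\sqrt5}{2}$, one has $\xi+2=2\sqrt5\,\varphi$ and $\overline\xi+2=-2\sqrt5\,\overline\varphi$ --- and set $\widetilde X:=V^{-1}XV=\tfrac1{\sqrt5}\bigl(\begin{smallmatrix}3\varphi&-2\overline\varphi\\2\varphi&-3\overline\varphi\end{smallmatrix}\bigr)$, $D_n:=\mathrm{diag}((\xi/2)^n,(\overline\xi/2)^n)$. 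Then $\Tr W_{[n_1,\dots,n_k]}=\Tr(\widetilde X D_{n_1}\widetilde X D_{n_2}\cdots\widetilde X D_{n_k})$ expands, summing over $\{+,-\}^k$, as
\[
\sum_{S\subseteq\{1,\dots,k\}}\Bigl(\prod_{i=1}^{k}\widetilde X_{\epsilon_i\epsilon_{i+1}}\Bigr)\,2^{-(n_1+\cdots+n_k)}\,\xi^{\sum_{i\in S}n_i}\,\overline\xi^{\sum_{i\notin S}n_i},
\]
where $\epsilon_i=+$ iff $i\in S$ and the indices are cyclic; the eigenvalue part already agrees with \eqref{eq:Phi}. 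Writing $t$ for the number of runs of $S$ --- equivalently the number of runs of $S^c$, hence the number of ascents and the number of descents of the cyclic word $\epsilon_1\cdots\epsilon_k$, so $r(S)=k-2t$ for proper nonempty $S$ --- one counts the $(+,+)$, $(-,-)$, $(+,-)$, $(-,+)$ steps and, using the entries of $\widetilde X$ and the identities $\xi+2=2\sqrt5\,\varphi$, $\overline\xi+2=-2\sqrt5\,\overline\varphi$, gets $\prod_{i=1}^{k}\widetilde X_{\epsilon_i\epsilon_{i+1}}=10^{-k}\,3^{\,r(S)}2^{\,k-r(S)}(\xi+2)^{|S|}(\overline\xi+2)^{|S^c|}$, the endpoint cases $S=\emptyset,\{1,\dots,k\}$ matching the convention $r=k$. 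Summing over $S$ gives exactly $3$ times the right-hand side of \eqref{eq:Phi}; the surplus factor $3$ is the one in the Markov--Cohn relation $\Tr=3m$, so $\Tr W_\nu=3\Phi(\nu)$.

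\emph{The main obstacle} I expect is Step A: one must verify that the small-variation condition is sharp on both sides --- every such necklace really yields a simple closed curve, and conversely every simple-curve class on $M$, after applying the isometries of $M$, is represented by a positive word with isolated $X$'s whose sequence of $Y$-run lengths is balanced --- and keep careful track of the inversion and isometry identifications, as well as the small degenerate triples $(1,1,1),(1,1,2)$, so that $\beta$ is an honest bijection matching the tree gradings. Step B, once the model is fixed, is a transfer-matrix computation whose only real subtlety is recognizing the cyclic product of entries of $\widetilde X$ as the stated closed form --- which is precisely where the weights $3^{r(S)-1}2^{k-r(S)}(\xi+2)^{|S|}(\overline\xi+2)^{|S^c|}$ and the normalization $10^{k}\cdot 2^{n_1+\cdots+n_k}$ in \eqref{eq:Phi} come from.
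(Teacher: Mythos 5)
Your proposal is correct in outline, and its computational heart coincides with the paper's: Step B is precisely the paper's key Proposition, proved the same way (diagonalize the repeated matrix $LR=X$, expand the trace of the product of $2\times 2$ transfer matrices as a sum over $\{1,2\}^k$, i.e.\ over subsets $S$, and recognize the product of the $3$'s and $2$'s as $3^{r(S)}2^{k-r(S)}$ via the run statistic, with the cyclic index shift $i\mapsto i-1$ absorbed by a permutation of $\mathcal{P}(\{1,\dots,k\})$). Your overall factor of $3$, i.e.\ $\Tr W_\nu=3\Phi(\nu)$ against Cohn's relation $\Tr=3m$, is exactly how the paper gets both injectivity and $\mathrm{Im}(\Phi)=\{\text{Markov numbers}\}$.

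Where you genuinely diverge is Step A. The paper does not go through the Markov tree at all: it uses the McShane--Parlier geometric reformulation together with the Buser--Semmler classification (Theorem \ref{thm:bus}) and then does an explicit multiplicity count on the set of \emph{oriented} simple closed geodesics — each necklace in $\mathcal{N}'$ yields eight words under renaming of generators, falling into two trace classes $L(LR)^{n_i}R\cdots$ and $L(LR)^{n_i-1}R\cdots$, and Lemma \ref{lem:eta} (via the bijection $\Theta$ between positive necklaces and $\{0,1\}$-necklaces) shows the two classes produce the same set of traces, giving the uniform multiplicity $12$ that matches the order of the isometry group. Your route through Cohn's triples-to-geodesics correspondence and the Stern--Brocot/Christoffel structure buys a cleaner conceptual statement (one vertex of the tree per necklace, no multiplicity bookkeeping), but the content of Lemma \ref{lem:eta} does not disappear: in fixing a canonical balanced word $XY^{n_1}\cdots XY^{n_k}$ per slope you must check that the \emph{other} natural representative of the same unoriented geodesic class (the one whose $Y$-exponents are the $n_i-1$, coming from swapping which generator is "isolated") gives the same trace, which is exactly what the paper's lemma verifies. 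You correctly flag this bookkeeping as the main obstacle; once it is done, the two proofs are the same argument in different clothing.
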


Previous work in \cite{FL24} studies the strong rigidity of the set $\mathcal{N}$, meaning that it gives a parametrization of the set via choosing the consecutive integers appearing in a necklace and the number of occurrences of each, finding the bijection (see \cite[Proposition~2.8]{FL24}),
\[\mathcal{N}\leftrightarrow\{(x,y,m)\in \ZZ_{>0}^3\mid \gcd(x,y)=1\}\cup\{(1,0,m)\mid m\in\ZZ_{\geq0}\}.\] 

\begin{remark}
    Necklaces in $\mathcal{N}$ coincide with the cyclic classes of exponents of the most appearing term in finite Sturmian words or Christoffel words (see e.g. \cite[Chapter 7]{Aig13}), generated by the lower stair sequence of the segment in $\mathbb{R}^2$ joining $(0,0)\in \ZZ^2$ and $(m,n)\in \ZZ_{>0}^2$ for $\gcd(m,n)=1$. Section \ref{part:sv} will cover the background of the set $\mathcal{N}$.
\end{remark}

\section*{Acknowledgments}
I want to thank Mingkun Liu wholeheartedly for all the work we have done together before, which really shaped our understanding and made this possible. I’m also very grateful to Hugo Parlier, not just for the great discussions and feedback, but especially for suggesting we explore this idea in the first place. I would also like to thank Jonah Gaster for very appreciated comments and insights on this result and the bigger picture of the topic. Finally, I would like to thank the (anonymous) referee for such a thorough check, noticing the computational mistakes and making this work much more enjoyable to read.

\part{Introduction}

A triple of positive integers $x,y,z>0$ such that \[
x^2+y^2+z^2=3xyz
\]
is called a \emph{Markov triple} and were introduced by Markov in the late XIX century. Frobenius conjectured in \cite{Fro13} that these triples are always determined by their largest number, also called a \emph{Markov number}. This is known as \emph{Markov's uniqueness conjecture}, see \cite{Aig13}. Since then, many partial results have been made for Markov numbers of a given form, such as prime powers or particular linear functions of prime powers, see e.g. \cite{Bar96,Zha07}. However, the general statement remains open.

Define the modular torus $\mathcal{M}$ as the unique hyperbolic structure on the once-punctured torus such that the isometry group is of maximal order, it being of order $12$. As proved by Cohn in \cite{Coh71}, there is a map from Markov's triples to lengths of simple closed geodesics on the modular torus. This allows us to reformulate Markov's conjecture in geometric terms. 

Markov's conjecture is equivalent to the \emph{geometric Markov's uniqueness conjecture:} \textit{For every two simple closed geodesics on $\mathcal{M}$ of the same length, there is an isometry bringing one to the other.} We refer to \cite{MP08} for further explanation on this equivalence. This has a translation in terms of the multiplicity of the lengths of oriented simple closed geodesics on the modular torus: the first two lengths have multiplicity $6$ and all the rest have multiplicity $12$. Here multiplicity refers to the number of geodesics attaining these lengths. This is due to the shortest $12$ geodesics being invariant (up to orientation) by one of the isometries, whilst the rest are not. Moreover, one can compute explicitly their lengths, as it will be later done, getting to the expression of the spectrum in Remark \ref{rk:spec}.

Our main result is a reformulation of the geometric version of the conjecture via explicitly computing the lengths of all simple geodesics in combinatorial terms to find a new combinatorial version of the statement, with a new explicit description of the set of all Markov numbers. There is a result of a similar nature in \cite{GL21} where Gaster and Loustau use McShane's identity in \cite{McS98} on the simple closed geodesics of the modular torus to find a reformulation of the conjecture as an identity on Lagrange numbers. 

\part{Small-variation necklaces}\label{part:sv}

Necklaces with small variation will appear in this work as the encoding of the set of simple closed curves on the once-punctured torus in Theorem \ref{thm:bus} from \cite{BS88}, giving finally the domain on which Markov numbers are described in Theorem \ref{mainthm}. The following contains only background on them. Readers already familiar with the topic might skip directly to Section \ref{part:dev}. Let us write their definition.

\begin{definition}[Necklace with small variation]\label{def:smvar} A \emph{necklace of integers} is a class of finite sequences of integers with respect to cyclic shifting. It is said to have \emph{small variation} if every two (cyclic) blocks of the same size have sum difference at most $1$, i.e. $[n_1,\hdots,n_k]$ with $n_i\in\ZZ$ has small variation if for any $s\geq 0$, and for any $i_0,j_0\in\{1,\hdots k\}$,
\begin{equation*} \label{eq:smaVa}
    \bigg| \sum_{i=i_0}^{i_0+s} n_{i} - \sum_{j=j_0}^{j_0+s} n_{j} \bigg| \leq 1,
\end{equation*}
    where indices are taken modulo $k$. We will call such a necklace to be \emph{primitive} whenever it cannot be represented by a power by concatenation of a smaller sequence.
\end{definition}

The small-variation condition for blocks of size $1$ implies directly that any small-variation necklace contains at most two integers and they have to be consecutive. We will follow this section tracking the example of the necklace $[3,4,4,3,4,3,4,4,3,4,3,4]$.

The small-variation condition applies to a necklace as an almost equidistribution of the configuration of the two integers appearing in the necklace. This is an analog to the balanced condition (see \cite{Vui03}) for the cyclic words in $a,b$ with all exponents of $a$ being $1$ and exponents on $b$ following a small-variation necklace, i.e., in our example, the cyclic word $[ab^3ab^4ab^4ab^3ab^4ab^3ab^4ab^4ab^3ab^4ab^3ab^4]$.

\begin{figure}[h!]
\centering
\includegraphics[width=0.5\linewidth]{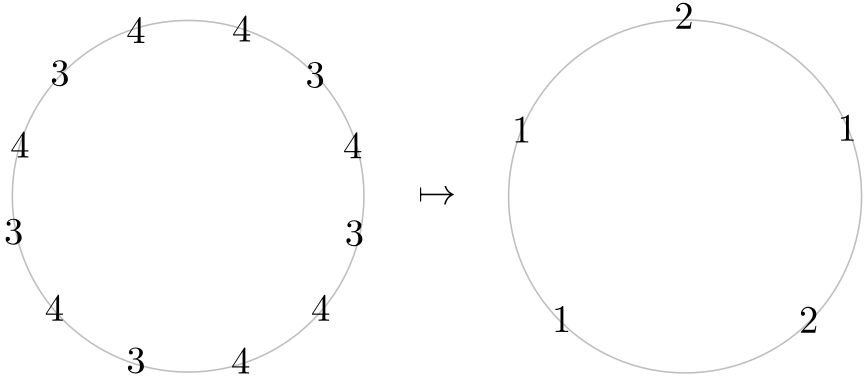}
\caption{}
    \label{fig:red}
\end{figure}

In a small-variation necklace, the integer that appears the least frequently will always appear isolated, whilst the other one will appear in larger runs, i.e. maximal cyclic subsets with respect to inclusion with all entries being the same. This allows for a reduction of the necklaces via mapping every small-variation necklace to the necklace consisting of the run sizes of the
integer appearing most frequently, as in Figure \ref{fig:red}. This reduction is proven to preserve the small-variation condition in \cite{FL24} and can be reversed with the extra information of the two consecutive integers appearing and which one appears most frequently, see also \cite{Kon85}.

A consequence of this reduction is that given the two consecutive integers $m,m+1\in\mathbb{Z}_{>0}$ appearing and the number of occurrences of each, let us call them $x,y\in\mathbb{Z}_{\geq0}$, respectively, there is a unique small-variation necklace with these features, see \cite[Proposition~2.8]{FL24}. Moreover, this necklace is primitive if and only if $\gcd(x,y)=1$. In our example these correspond to $m=3$, $x=5$ and $y=7$. The above gives a bijection of the set $\mathcal{N}$ appearing in Theorem \ref{mainthm} as follows
\[\mathcal{N}\leftrightarrow\{(x,y,m)\in \ZZ_{>0}^3\mid \gcd(x,y)=1\}\cup\{(1,0,m)\mid m\in\ZZ_{\geq0}\}.\]

Another approach to the above bijection is by understanding the relation between necklaces and Christoffel words, see \cite{Aig13} for more background on these objects. The following are two posible approaches, represented in Figure \ref{fig:christ} for our example. Fix a given primitive small-variation necklace composed of integers $m$ and $m+1$, appearing $x$ and $y$ times respectively. First, draw the segment in $\mathbb{R}^2$ joining $(0,0)$ and $(x,y)$, and consider the highest stair on the integer grid bellow the segment. By writing $m$ for every unit of horizontal segment and $m+1$ for every unit of vertical segment, the sequence appearing represents the desired small-variation necklace. Alternatively, draw the segment joining $(0,0)$ and $(x+y,xm+y(m+1))$. Consider again the highest stair on the integer grid bellow the segment. By writing $a$ for every unit of horizontal segment and $b$ for every unit of vertical segment, we find a word of the form $ab^{n_1}\cdots ab^{n_k}$ where $[n_1,\hdots, n_k]$ is our original necklace. In our example, the stairs in Figure \ref{fig:christ} correspond to the segments associated to $(5,7)$ and to $(12,43)$, in both descriptions above.

\begin{figure}[h!]
    \centering
    \begin{overpic}[width=0.9
    \linewidth]{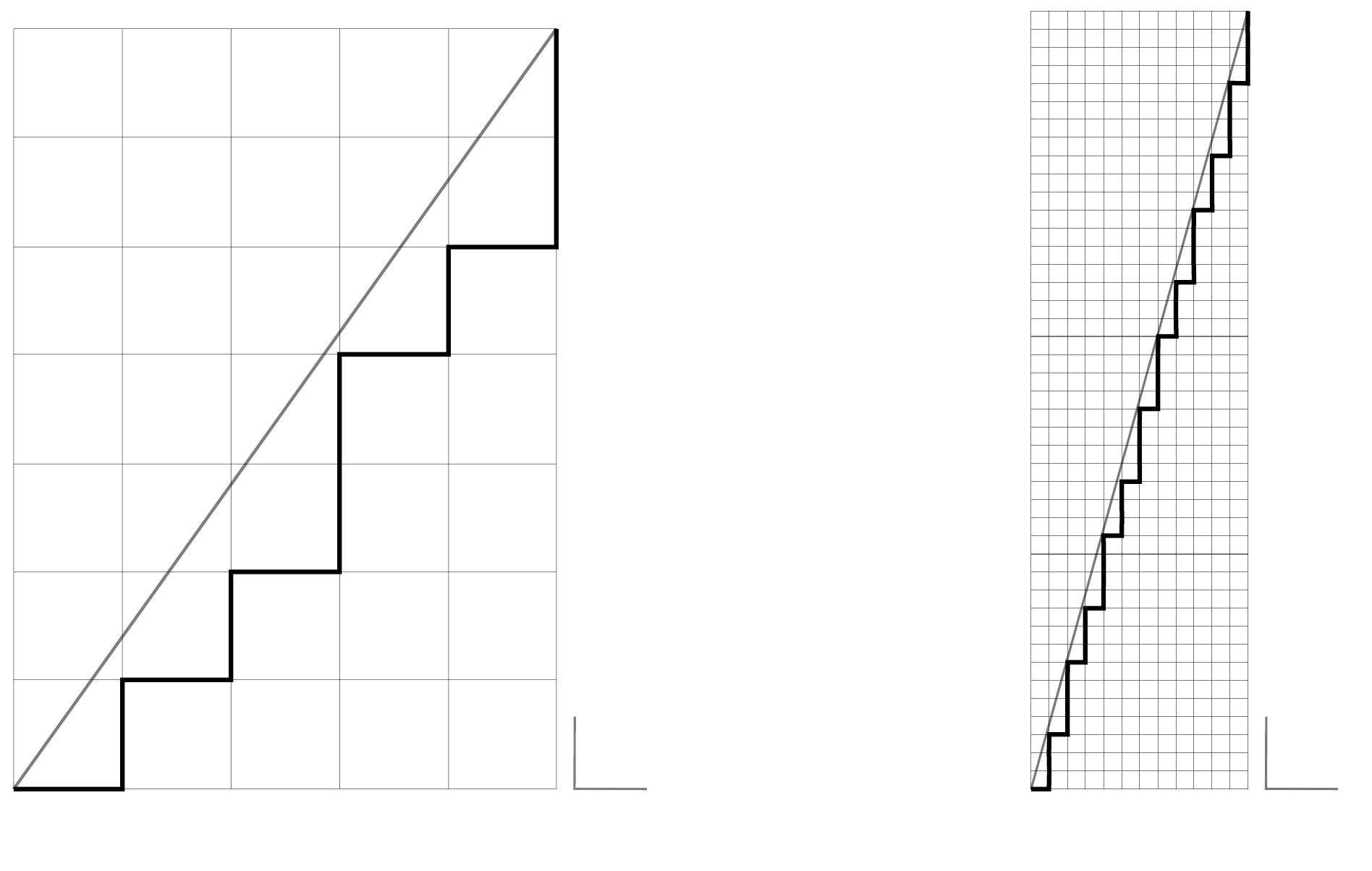}
    \put(47.5,5){$3$}
    \put(41,12){$4$}
    \put(98,5.25){$a$}
    \put(91.5,12){$b$}
    \put(5,0){$[3,4,3,4,3,4,4,3,4,3,4,4]$}
    \put(60,0){$[ab^3ab^4ab^3ab^4ab^3ab^4ab^4ab^3ab^4ab^3ab^4ab^4]$}
    \end{overpic}
    \caption{}
    \label{fig:christ}
\end{figure}

Once the appearing integers and the numbers of appearences are prescribed, the combinatorics of small-variation necklaces rely on understanding what is the unique configuration of these numbers that satisfies the small variation condition, i.e. how do they distribute. The above are different descriptions of the configurations that these necklaces naturally realize. Note that understanding the different characterizations of these configurations plays a big role in their image by the function $\Phi$ in Theorem \ref{mainthm}, given that the run function $r$ gives different weights to subsets of the necklace depending on how many adjacent numbers they have. The run function is the only term depending on how the numbers are distributed and separates small-variation necklaces from other necklaces with the same elements but different configuration.

\part{Development}\label{part:dev}
Let $S$ be a topological once-punctured torus, i.e. a surface of signature $(1,1)$. Fix $\{a,b\}$ canonical generators of $\pi_1(S)\cong \mathbb{F}_{a,b}$ as in Figure \ref{fig:gen}. Define (oriented) closed curves as free homotopy classes of loops on $S$, that are in correspondence with conjugacy classes of $\pi_1(S)$. Define the set of closed curves as $\mathcal{C}(S)$. These are thus in correspondence with cyclic shift classes of cyclically reduced words in $\{a,b,a^{-1},b^{-1}\}$. Fix an ideal triangulation in $S$ as the colored one in Figure \ref{fig:gen}. Consider its dual graph $\Gamma$, a trivalent graph with two vertices embedded in $S$, also pictured in Figure \ref{fig:gen}.

Recall that the Teichmüller space of $S$, denoted by $\mathcal{T}(S)$ is the space of marked hyperbolic structures on the once-punctured torus $S$. As was first introduced by Thurston in \cite{Thu98}, it can be parametrized by shear coordinates in the fixed ideal triangulation, measuring the hyperbolic displacement between adjacent ideal triangles. The modular torus is defined as the hyperbolic structure on $S$ with zero shear coordinates and will be denoted by $\mathcal{M}$. 

\begin{figure}[h!]
    \centering
    \begin{overpic}[width=0.5\linewidth]{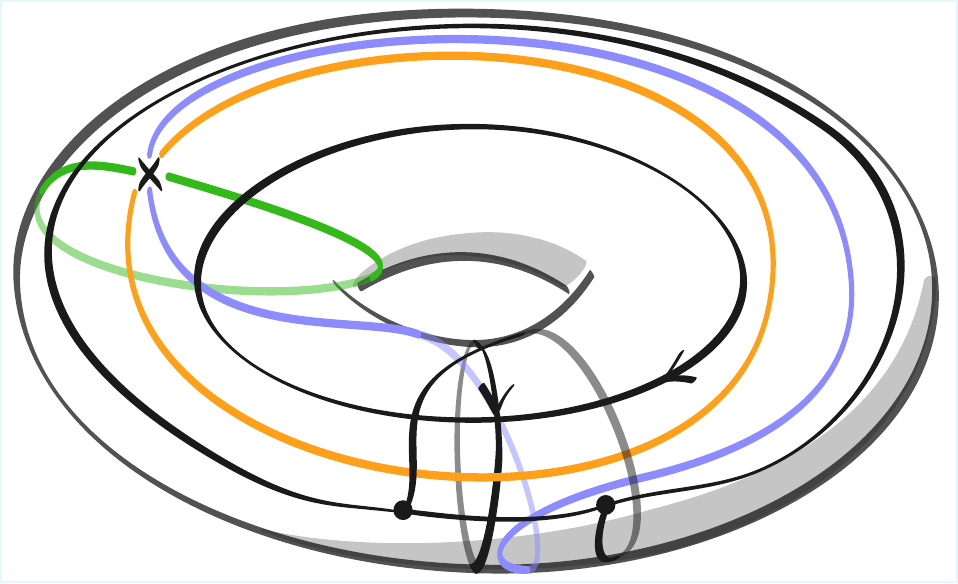}
    \put(3,50){$S$}
    \put(37,3.5){$\Gamma$}
    \put(69,24){$b$}
    \put(53,21){$a$}
    \end{overpic}
    \caption{}
    \label{fig:gen}
\end{figure}

Given the dual graph of the fixed triangulation, there is a natural map 
\[
\omega: \mathcal{C}(S)\to \mathrm{words}(L,R)/\,\mathrm{cyclic\,\, shift}
\]
by the following procedure. Given a closed curve, retract it to $\Gamma$ without backtracking. Fix a startpoint. This gives a finite sequence of Left/Right turns on the graph until reaching the startpoint again. Consider this sequence as a word in "L", and "R". Forgetting the startpoint corresponds to considering cyclic shifts of the words. 

Define two matrices, 
\[L=\begin{pmatrix}
1 & 1 \\
0 & 1 
\end{pmatrix}\text{, and }R=\begin{pmatrix}
1 & 0 \\
1 & 1 
\end{pmatrix}.\]

Let $\ell_\mathcal{M}:\mathcal{C}(S)\to \mathbb{R}_{\geq0}$ denote the hyperbolic length of a curve in the modular torus via measuring the length of the unique geodesic in the homotopy class. Identifying composition of words with multiplication of matrices, the following trace formula holds for the modular torus, see e.g. \cite[p.~8]{MS16},
\[
\ell_\mathcal{M}(\gamma)= 2\cdot \mathrm{acosh}(\mathrm{Trace}(\omega(\gamma))/2).
\]

Recall that a simple curve is a curve without self-intersection, i.e., that it can be represented by a loop on $S$ that does not intersect itself. We want to study the simple length spectrum of $\mathcal{M}$, $\mathcal{SS}(\mathcal{M})$, i.e., the set of lengths of oriented simple closed curves with respect to $\ell_\mathcal{M}$, with multiplicity. There is a well-known classification of all words in $\{a,b,a^{-1},b^{-1}\}$ corresponding to simple closed curves.

\begin{theorem}[{\cite[Theorem~6.2]{BS88}}] \label{thm:bus}
    Every simple closed curve on $S$ can be represented, after suitably renaming the generators in $\{a,b,a^{-1},b^{-1}\}$, by one of the following words:
    \begin{enumerate}
        \item \label{type1}
            $a$,

        \item \label{type2}
            $aba^{-1}b^{-1}$,

        \item \label{type}
            $ab^{n_1} a b^{n_2} \cdots a b^{n_r}$, where $[n_1, \dots, n_r]$ is a necklace of positive integers with small variation (see Definition \ref{def:smvar}).
    \end{enumerate}
    Conversely, each of these words is homotopic to a power of a simple closed curve.
\end{theorem}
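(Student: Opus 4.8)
The plan is to prove both directions through the classical correspondence between essential simple closed curves on the once-punctured torus and primitive slopes, combined with the theory of Sturmian (balanced) words. I identify $S$ with the complement of a point in the flat torus $\mathbb{R}^2/\mathbb{Z}^2$, so that $a$ and $b$ are the horizontal and vertical core curves, $H_1(S;\mathbb{Z})=\mathbb{Z}\langle a\rangle\oplus\mathbb{Z}\langle b\rangle$, and the puncture is encircled by $[a,b]=aba^{-1}b^{-1}$. The underlying fact I would establish first is that isotopy classes of essential, non-peripheral simple closed curves on $S$ are in bijection with primitive classes $(p,q)\in\mathbb{Z}^2$ up to sign, i.e. with slopes $q/p\in\mathbb{Q}\cup\{\infty\}$. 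The nontrivial content is that such a curve is determined up to isotopy by its homology class; I would obtain this from the bigon criterion together with the fact that $\mathrm{Mod}(S)\cong\mathrm{SL}(2,\mathbb{Z})$ acts transitively on primitive homology classes, so all such curves form a single orbit. Realization is concrete: the image in $S$ of a Euclidean line of rational slope $q/p$ in lowest terms is an embedded flat geodesic, hence a simple closed curve in class $(p,q)$.

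For the forward direction I would read off the word of such a curve as its cutting sequence, recording an $a$ at each crossing of a vertical grid line and a $b$ at each crossing of a horizontal grid line. Over one period the line of slope $q/p$ produces a cyclic word with $p$ letters $a$ and $q$ letters $b$; this is exactly a Christoffel word, which is balanced in the sense of \cite{Vui03,Aig13}. After applying the finite symmetry group generated by the swap $a\leftrightarrow b$ and the inversions — this is the \emph{renaming} allowed in the statement — I may assume $q\ge p\ge 1$, so that $a$ is the minority letter and appears isolated. The word then takes the form $ab^{n_1}\cdots ab^{n_r}$ with $r=p$ and $\sum_i n_i=q$. The degenerate slopes give the remaining cases: $q/p=\infty$ or $0$ yields $a$ (type \eqref{type1}, after renaming), while the peripheral class yields $aba^{-1}b^{-1}$ (type \eqref{type2}).

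The heart of the argument, and the step I expect to be the main obstacle, is to show that the exponent necklace $[n_1,\ldots,n_r]$ has small variation. The size-$1$ block condition is immediate from balancedness, since consecutive runs of $b$ differ in length by at most $1$. For the general block condition of Definition \ref{def:smvar} I would exploit the self-similar (derivation) structure of Sturmian words: the run-length encoding of a Christoffel word is again, up to the standard continued-fraction/Stern--Brocot recursion, a balanced sequence, and iterating this shows that the sums of any two cyclic blocks of equal size differ by at most $1$. The difficulty is that one must propagate balancedness through \emph{all} block sizes rather than only through adjacent runs; organizing this as an induction along the Stern--Brocot path of $q/p$ — equivalently, along the word in $L,R$ built from its continued fraction — keeps the bookkeeping clean and matches the reduction map recorded in Part \ref{part:sv}.

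Finally, for the converse I would run the bijection backwards. By the reduction recalled in Part \ref{part:sv} (from \cite{FL24}), a primitive small-variation necklace $[n_1,\ldots,n_r]$ corresponds to a unique Christoffel word and hence to a primitive slope, whose straight-line representative is a simple closed curve with cutting sequence $ab^{n_1}\cdots ab^{n_r}$; a non-primitive necklace is a concatenation power, so the associated word is a proper power of such a curve, which is precisely the sense in which the statement asserts each word is homotopic to a power of a simple closed curve. Together with the evident simplicity of the representatives of $a$ and $aba^{-1}b^{-1}$, this shows that each listed word is realized by (a power of) a simple closed curve and, combined with the forward direction, completes the classification.
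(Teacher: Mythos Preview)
The paper does not prove this theorem: it is quoted verbatim from \cite[Theorem~6.2]{BS88} and used as a black box, so there is no ``paper's own proof'' to compare against. Your proposal is therefore not a reconstruction of anything in the present paper but an independent sketch of the Buser--Semmler result.

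On its own merits your outline is sound and follows the expected route: classify essential non-peripheral simple closed curves on the once-punctured torus by primitive slopes, read off the conjugacy class as a cutting sequence (a Christoffel word), and then translate balancedness of that word into the small-variation condition on the exponent necklace. One point to tighten: the passage from ``the Christoffel word is balanced'' to ``the necklace $[n_1,\dots,n_r]$ has small variation for \emph{all} block sizes'' is the genuine content, and your plan to induct along the Stern--Brocot/continued-fraction derivation is workable but heavier than necessary. A more direct argument: for any two cyclic blocks of the same size $s$ in the necklace, compare the two factors of the Christoffel word delimited by the corresponding $a$'s; both contain exactly $s$ occurrences of $a$, so if their $b$-counts differed by $2$ or more you could extract two factors of the \emph{same} length whose $a$-counts differ by at least $2$, contradicting balancedness. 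This avoids the recursive machinery. For the converse, your use of the bijection recalled in Part~\ref{part:sv} is appropriate and matches how the present paper packages that correspondence.
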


We want to study the lengths of the curves in Theorem \ref{thm:bus} with the hyperbolic structure $\mathcal{M}$. The curves of the second type correspond to simple curves bounding the puncture and have zero hyperbolic length, hence we are only interested in the first and third types. Curves of the first type are only $4$, which are $a,b,a^{-1},b^{-1}$, which all correspond to the cyclic word $LR$ in the graph $\Gamma$. 

For the curves of the third type, note that for a given necklace of positive integers $[n_1,\hdots, n_k]$ with small variation different from $[1]$, there are eight closed curves via renaming the generators. These are \[ab^{n_1}\cdots ab^{n_k}, ba^{n_1}\cdots ba^{n_k}, ab^{-n_1}\cdots ab^{-n_k}\text{, and }ba^{-n_1}\cdots ba^{-n_k},\] and their inverses. These correspond to the cyclic words in $\Gamma$ \[R(RL)^{n_1-1}L\cdots R(RL)^{n_k-1}L, L(LR)^{n_1-1}R\cdots L(LR)^{n_k-1}R,\] \[ L(LR)^{n_1}R\cdots L(LR)^{n_k}R\text{, and },R(RL)^{n_1}L\cdots R(RL)^{n_k}L\] respectively. Moreover, the first two will have the same trace and so will the last two. Similarly, to the exceptional necklace $[1]$ there are only four closed curves via renaming the generators, which are $ab$, $ab^{-1}$ and their inverses, corresponding to the cyclic words in the graph $\Gamma$: $LR$ and $LLRR$, respectively.

 Let $\omega=[n_1,\hdots, n_k]$ be a necklace of positive integers. Asssume $n_i\in\{m,m+1\}$ for some $m\in \ZZ_{>0}$ for all $i=1,\hdots, k$, as otherwise the necklace cannot have small variation. Let $|\omega|_m$ and $|\omega|_{m+1}$ be the number of occurrences of $m$ and $m+1$. Denote by $\mathcal{N}$ the \emph{set of primitive necklaces of positive integers with small variation} and the trivial necklace $[0]$. Denote by $\mathcal{N}'=\mathcal{N}\setminus\{[0],[1]\}$. By \cite[Proposition~2.8]{FL24}, given $m,x,y\in \ZZ_{>0}$, there is a unique necklace with small variation $[n_1,\hdots,n_k]$ with $n_i\in \{m,m+1\}$ for all $i=1,\dots k$ and with $|\omega|_m=x$, $|\omega|_{m+1}=y$. Moreover, the necklace is primitive (not represented by a power of a shorter sequence) if and only if $\gcd(x,y)=1$.

Thus, defining the real-valued function \begin{equation}\label{eq:theta}\theta(\_)=2\cdot \mathrm{acosh}(\mathrm{Trace}(\_)/2)\end{equation} on $SL_2(\ZZ)$, we find, recalling that $R = L^T$,
\begin{align*}
\mathcal{SS}(\mathcal{M})=6\{\theta(LR)\}\,&\cup\,2\{\theta(LLRR)\}\,\cup\,4\{\theta(L(LR)^{n_1}R\cdots L(LR)^{n_k}R)\mid [n_1,\hdots, n_k]\in\mathcal{N}'\}\\&\cup\,4\{\theta(L(LR)^{n_1-1}R\cdots L(LR)^{n_k-1}R)\mid [n_1,\hdots, n_k]\in\mathcal{N}'\},
\end{align*}
where the scalar multiples denote multiplicity of the set.

The small variation condition is invariant under the choice of the two consecutive integers appearing. Define $\mathcal{N}^*$ as the set of primitive necklaces with all elements in $\{0,1\}$ with small variation, excluding the two simplest necklaces $[0]$ and $[1]$. It follows that 
\begin{align}\label{eq:spec}
\begin{split}
\mathcal{SS}(\mathcal{M})=&8\{\theta(L(LR)^{n_1}R\cdots L(LR)^{n_k}R)\mid [n_1,\hdots, n_k]\in\mathcal{N}'\}\,\cup\\ &4\{\theta(L(LR)^{n_1}R\cdots L(LR)^{n_k}R)\mid [n_1,\hdots, n_k]\in\mathcal{N}^*\}\,\cup\\ &6\{\theta(LR)\}\,\cup\, 6\{\theta(LLRR)\}.
\end{split}
\end{align}

\begin{lemma}\label{lem:eta} Let $\eta=[\eta_1,\hdots, \eta_l]\in \mathcal{N}^*$. Write it uniquely (up to cyclic shifting) as \[\eta=[\underbrace{0,\hdots,0}_{n_1-1},1,\underbrace{0,\hdots,0}_{n_2-1},1,\hdots,\underbrace{0,\hdots,0}_{n_k-1},1].\]
Then, 
\[
\Tr(L(LR)^{\eta_1}R\cdots L(LR)^{\eta_l}R)=\Tr(L(LR)^{n_1}R\cdots L(LR)^{n_k}R).
\]
\end{lemma}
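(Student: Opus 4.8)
The plan is to first rewrite the left-hand word as a product of very simple blocks, then conjugate that product by a single fixed matrix to turn it into the right-hand word, and conclude by invariance of trace under conjugation.

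First I would reduce the left-hand side. Since every $\eta_j\in\{0,1\}$, the factor $L(LR)^{\eta_j}R$ equals $LR$ when $\eta_j=0$ and equals $L(LR)R=L^2R^2$ when $\eta_j=1$. In the given normal form the zeros of $\eta$ occur in $k$ cyclic runs of lengths $n_1-1,\dots,n_k-1$, each immediately followed by a single $1$, so
\[
L(LR)^{\eta_1}R\cdots L(LR)^{\eta_l}R=\prod_{i=1}^{k}(LR)^{n_i-1}L^2R^2 .
\]
Hence it suffices to prove $\Tr\!\big(\prod_{i=1}^{k}(LR)^{n_i-1}L^2R^2\big)=\Tr\!\big(\prod_{i=1}^{k}L(LR)^{n_i}R\big)$.

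Next I would introduce the auxiliary matrices $S=\begin{pmatrix}2&1\\1&0\end{pmatrix}$ and $Q=S^{-1}L=LR-2I=\begin{pmatrix}0&1\\1&-1\end{pmatrix}$. Two one-line observations are the whole point: (i) $S^2=L^2R^2$, and (ii) $Q=LR-2I$ is a polynomial in $LR$, hence commutes with $LR$ and with every power $(LR)^m$. Using (i) and $Q^{-1}=L^{-1}S$ one gets $Q^{-1}L^2R^2\,Q=(L^{-1}S)S^2(S^{-1}L)=L^{-1}S^2L=L^{-1}L^2R^2L=LR^2L$. Conjugating the whole product by $Q$ and using (ii) to slide each $Q^{\pm1}$ past the central $(LR)^{n_i-1}$ factors,
\[
\Tr\Big(\prod_{i=1}^{k}(LR)^{n_i-1}L^2R^2\Big)=\Tr\Big(Q^{-1}\Big(\prod_{i=1}^{k}(LR)^{n_i-1}L^2R^2\Big)Q\Big)=\Tr\Big(\prod_{i=1}^{k}(LR)^{n_i-1}LR^2L\Big).
\]
Finally, the last word ends in $L$; moving this trailing $L$ cyclically to the front (which preserves the trace) regroups the product as $\prod_{i=1}^{k}L(LR)^{n_i-1}LR^2$, and $L(LR)^{n_i-1}LR^2=L(LR)^{n_i-1}(LR)R=L(LR)^{n_i}R$, yielding $\Tr\!\big(\prod_{i=1}^{k}L(LR)^{n_i}R\big)$ as desired.

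The only non-formal ingredient is producing the matrix $Q$ (equivalently $S$); once written down, properties (i) and (ii) are immediate $2\times2$ checks, so there is no real obstacle beyond spotting it. If one prefers not to pull $Q$ out of a hat, the same facts give the blockwise identity $(LR)^{n-1}L^2R^2=P\,L(LR)^{n}R\,P^{-1}$ with $P=QL^{-1}=S^{-1}$, provable by induction on $n$: the base case $n=1$ is that $P=S^{-1}$ commutes with $L^2R^2=S^2$, and the inductive step uses that $PL=Q$ commutes with $LR$. Conceptually, conjugation by $Q$ (or $P$) realizes one of the extra isometries of the modular torus, which is the geometric reason these two a priori distinct simple closed geodesics have equal length.
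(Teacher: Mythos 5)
Your proof is correct, and after the (shared) first step it takes a genuinely different route from the paper's. Both arguments begin by rewriting the left-hand word as $\prod_{i=1}^{k}(LR)^{n_i-1}L^2R^2$. The paper then applies a double cyclic shift (moving the trailing $RR$ to the front), regroups each block $RR(LR)^{n_i-1}LL$ as $R(RL)^{n_i}L$, and finishes by invoking the $L\leftrightarrow R$ symmetry coming from $R=L^{T}$. You instead conjugate by the explicit matrix $Q=LR-2I$, exploiting that $Q$ commutes with all powers of $LR$ while $Q^{-1}L^2R^2Q=LR^2L$, and then perform a single cyclic shift; I checked the matrix identities ($S^2=L^2R^2$, $Q=S^{-1}L$, $Q^{-1}L^2R^2Q=LR^2L$) and they all hold. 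Your version has the advantage of never leaving words in $L,R$ with the same orientation of the generators, so it avoids the small extra justification the paper's transpose step implicitly needs (that swapping $L$ and $R$ throughout a word preserves the trace, e.g.\ via conjugation by $\left(\begin{smallmatrix}0&1\\1&0\end{smallmatrix}\right)$); the cost is that the conjugating matrix $Q$ has to be produced, though your closing remark rightly identifies it as realizing an extra isometry of the modular torus, which is the same symmetry the paper's transpose argument is secretly using. The paper's version is shorter and purely combinatorial on the letters.
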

\begin{proof}
    The lemma follows from elementary properties of the trace. We will write it for convenience of the reader. First of all, 
    \[
    L(LR)^{\eta_1}R\cdots L(LR)^{\eta_l}R=(LR)^{n_1-1}LLRR \cdot(LR)^{n_2-1}LLRR\cdot \,\cdots\,\cdot  (LR)^{n_k-1}LLRR.  
    \]
    Applying a double cyclic shift and regrouping one finds 
    \begin{align*}RR(LR)^{n_1-1}LLRR \cdot(LR)^{n_2-1}LLRR\cdot \,\cdots\,\cdot  &(LR)^{n_k-1}LL = \\  &=R(RL)^{n_1}L\cdot R(RL)^{n_2}L\cdots R(RL)^{n_k}L,\end{align*}
    which has the same trace as $L(LR)^{n_1}R\cdots L(LR)^{n_k}R$ since $R=L^T$.
\end{proof}
Moreover, note that in the bijection
\begin{align*}
    \Theta:\{\text{necklace of positive integers}\}&\longrightarrow \{\text{necklace of integers in }\{0,1\}\}\\
    \eta=[n_1,\hdots, n_k]&\longmapsto [\underbrace{0,\hdots,0}_{n_1-1},1,\underbrace{0,\hdots,0}_{n_2-1},1,\hdots,\underbrace{0,\hdots,0}_{n_k-1},1],
\end{align*}
it follows from \cite[Lemma~2.9]{FL24} that a necklace of positive integers $\eta$ has small variation if and only if $\Theta(\eta)$ has, as $\eta$ is the necklace of runs of its image plus $1$, ensuring positivity.
Hence, it follows from Lemma \ref{lem:eta} that
\[
\mathcal{SS}(\mathcal{M})=12\{\theta(L(LR)^{n_1}R\cdots L(LR)^{n_k}R)\mid [n_1,\hdots, n_k]\in\mathcal{N}'\}\,\cup 6\{\theta(LR)\}\,\cup\, 6\{\theta(LLRR)\},
\] 
and so the geometric Markov's uniqueness conjecture is true if and only if 
\[
|\{[n_1,\dots, n_k]\in\mathcal{N}' \mid \mathrm{Trace}(L(LR)^{n_1}R \cdots L(LR)^{n_k}R)=\ell \}|\in \{0,1\} \text{ for all }\ell\in\mathbb{R}.
\]

The last ingredient to prove is the rewriting of this trace in combinatorial terms of the simple length spectrum.
\begin{proposition} Let $\xi=3+\sqrt{5}$ and $\overline\xi=3-\sqrt{5}$. For any $n_1,\dots n_k\in \mathbb{Z}_{\geq0}$, 
\begin{align*}
\mathrm{Trace}(L(LR)^{n_1}&R \cdots L(LR)^{n_k}R)=\\&=\frac{1}{10^k\cdot 2^{n_1+\cdots +n_k}}\sum_{S\subseteq\{1,\hdots,k\}}3^{r(s)}2^{k-r(S)}(\xi+2)^{|S|}(\overline\xi+2)^{|S^c|}\xi^{\sum_{i\in S}n_i}\overline\xi^{\sum_{i\in S^c}n_i},
\end{align*}
where $r(S)=\sum_{s\in\{\text{runs of }S\text{ and }S^c\}}|s|-1$ if $S\neq \{1,\hdots, k\}$ and $S\neq\emptyset$, and $r(\{1,\hdots, k\})=r(\emptyset)=k$.
    
\end{proposition}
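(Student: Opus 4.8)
The plan is to diagonalize the two matrices $LR$ and $LLRR$ simultaneously — or rather, to exploit that $LR = \begin{pmatrix} 2 & 1 \\ 1 & 1\end{pmatrix}$ and $LLRR = \begin{pmatrix} 1 & 2 \\ 2 & 5\end{pmatrix}$ can be written in a common eigenbasis up to a controlled discrepancy — and then expand the product $\prod_{i=1}^k L(LR)^{n_i}R$ by inserting a resolution of the identity between consecutive blocks. First I would rewrite, as in the proof of Lemma~\ref{lem:eta}, the cyclic word so that (up to a trace-preserving cyclic shift) the product becomes $\prod_{i=1}^k (LR)^{n_i-1}(LLRR)$, or equivalently keep the form $\prod_i L(LR)^{n_i}R$ and observe that $L(LR)^{n}R = L \cdot (LR)^n \cdot R$ with $(LR)^n$ the only part carrying the exponent $n_i$. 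The matrix $LR$ has eigenvalues $\xi/2 = (3+\sqrt5)/2$ and $\overline\xi/2 = (3-\sqrt5)/2$ (note $\xi\overline\xi = 4$, so these multiply to $1$, consistent with $\det(LR)=1$, and $\xi+\overline\xi = 6$, so $\Tr(LR) = 3$). Hence $(LR)^{n}$ has a clean closed form in terms of $(\xi/2)^n$ and $(\overline\xi/2)^n$; the factor $2^{-(n_1+\cdots+n_k)}$ in the target formula is exactly the product of these denominators, and the two sums $\sum_{i\in S} n_i$ and $\sum_{i\in S^c} n_i$ record, for each block $i$, whether we picked the $\xi$-eigenvalue ($i\in S$) or the $\overline\xi$-eigenvalue ($i\in S^c$) when expanding $(LR)^{n_i} = P\,\mathrm{diag}((\xi/2)^{n_i},(\overline\xi/2)^{n_i})\,P^{-1}$.

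Concretely, I would write $(LR)^{n_i} = (\xi/2)^{n_i} E_+ + (\overline\xi/2)^{n_i} E_-$ where $E_\pm$ are the (rank-one) spectral projectors of $LR$, so that
\[
\prod_{i=1}^k L(LR)^{n_i}R = \sum_{S\subseteq\{1,\dots,k\}} \Big(\prod_{i\in S}(\xi/2)^{n_i}\Big)\Big(\prod_{i\in S^c}(\overline\xi/2)^{n_i}\Big)\; \prod_{i=1}^k L\,E_{\varepsilon_i(S)}\,R,
\]
where $\varepsilon_i(S) = +$ if $i\in S$ and $-$ otherwise. Taking the trace and using cyclicity, one gets $\Tr$ of a product $\prod_i (L E_{\varepsilon_i} R)$ of rank-one matrices; such a trace factors as a product of the scalar "overlaps" $\langle v_{\varepsilon_{i+1}} , v_{\varepsilon_i}\rangle$-type quantities along the cyclic word, because $L E_\pm R = (L u_\pm)(R^T w_\pm)^T$ is again rank one (with $u_\pm, w_\pm$ the right/left eigenvectors of $LR$). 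The trace of a cyclic product of rank-one matrices $\prod_i (x_i y_i^T)$ equals $\prod_i (y_i^T x_{i+1})$, cyclically. So the whole thing collapses to a product, over the $k$ cyclic adjacencies, of one of at most four possible scalar factors, determined by whether $(\varepsilon_i,\varepsilon_{i+1})$ equals $(+,+)$, $(-,-)$, $(+,-)$, or $(-,+)$. The combinatorial bookkeeping of which adjacency types occur, and how often, is precisely what the run function $r(S)$ counts: a "same-type" adjacency $(+,+)$ or $(-,-)$ happens inside a run of $S$ or of $S^c$, and $\sum_{\text{runs}}(|s|-1) = r(S)$ counts exactly the number of same-type cyclic adjacencies (with the degenerate cases $S = \emptyset$ or $S = \{1,\dots,k\}$ giving all $k$ adjacencies of the same type, hence $r = k$, as specified). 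This identifies the factor $3^{r(S)}2^{k-r(S)}$ as (same-type overlap)$^{r(S)}$ times (opposite-type overlap)$^{k-r(S)}$, up to constants absorbed into the $(\xi+2)^{|S|}(\overline\xi+2)^{|S^c|}$ and $10^{-k}$ prefactors — indeed $\xi+2 = 5+\sqrt5$ and $\overline\xi+2 = 5-\sqrt5$ satisfy $(\xi+2)(\overline\xi+2) = 20$ and should emerge from the norms $\|L u_\pm\|^2$ or from $L E_\pm R$ applied to the eigenvectors.

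The main obstacle I anticipate is purely computational hygiene rather than conceptual: getting the normalization constants exactly right. One must fix a specific choice of (unnormalized) eigenvectors $u_\pm, w_\pm$ for $LR$, compute $L u_\pm$, $R^T w_\pm$, and then the four scalar overlaps and the relevant self-overlaps, and verify that after multiplying the $k$ adjacency factors and the $k$ "vertex" factors $L E_{\varepsilon_i} R$ contributes, everything regroups into $\frac{1}{10^k 2^{\sum n_i}} 3^{r(S)} 2^{k-r(S)} (\xi+2)^{|S|}(\overline\xi+2)^{|S^c|} \xi^{\sum_{i\in S} n_i}\overline\xi^{\sum_{i\in S^c}n_i}$ with no stray factors of $2$ or $\sqrt5$. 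I would do this by first checking small cases by hand — $k=1$ (where $r = 1$ always, and the formula should reduce to $\Tr(L(LR)^{n}R) = \Tr((LR)^{n}LLRR \cdot \text{shift})$, a scalar identity in one variable $n$ that pins down all constants) and $k=2$ — and only then writing the general induction/expansion, since the $k=1$ and $k=2$ checks will catch any misallocated constant. A secondary subtlety is the boundary convention for $r$ on $S\in\{\emptyset,\{1,\dots,k\}\}$: there the cyclic word $\prod_i L E_\pm R$ is a pure power of a single rank-one-conjugated matrix and its trace must be computed directly, confirming $r = k$ is the right convention (all $k$ adjacencies are same-type). Finally, I note the statement has "$3^{r(s)}$" with a lowercase $s$, evidently a typo for $3^{r(S)}$, and that the $\Phi$ of equation~\eqref{eq:Phi} uses $3^{r(S)-1}$ together with the extra factor needed to connect $\mathrm{Trace}$ to the Markov number (the classical relation $3m = \Tr/2 \cdot$ something), so a last step is to reconcile the $-1$ in the exponent with the passage from traces to Markov numbers via $\ell_\mathcal{M}(\gamma) = 2\,\mathrm{acosh}(\Tr/2)$ and Cohn's correspondence.
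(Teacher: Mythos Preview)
Your approach is correct and essentially identical to the paper's: the paper diagonalizes $LR$, writes $L(LR)^n R = \frac{1}{2^n}\tilde P\,\mathrm{diag}(\overline\xi^n,\xi^n)\,\tilde{\tilde P}$, collapses the cyclic product to $\frac{1}{10^k 2^{\sum n_i}}\Tr\big(\prod_l M\,\mathrm{diag}(\overline\xi^{n_l},\xi^{n_l})\big)$ for a single $2\times 2$ matrix $M$ with entries $3(\overline\xi+2),\,2(\xi+2),\,2(\overline\xi+2),\,3(\xi+2)$, and then expands that trace as a sum over index strings $(j_1,\dots,j_k)\in\{1,2\}^k$ --- exactly your sum over $S$, with the $3$'s and $2$'s in $M$ supplying the same-type/opposite-type adjacency factors and the diagonal matrices supplying the $(\xi+2)\xi^{n_l}$ or $(\overline\xi+2)\overline\xi^{n_l}$ terms. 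Your spectral-projector and rank-one-overlap language is a coordinate-free rephrasing of the same computation; the explicit $M$ is precisely the constant bookkeeping you deferred to the small-case check.
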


\begin{proof}
Note that
\[
LR=\begin{pmatrix}
2 & 1 \\
1 & 1 
\end{pmatrix}=P\cdot \frac{1}{2}\begin{pmatrix}
\overline{\xi} & 0 \\
0 & \xi 
\end{pmatrix}\cdot P^{-1},
\]
where $\xi=3+\sqrt{5}$, $\overline{\xi}=3-\sqrt{5}$, $P=\frac{1}{2}\begin{pmatrix}
    \overline{\xi}-2 & \xi -2 \\ 2 & 2
\end{pmatrix}$ and $P^{-1}=\frac{1}{10}\begin{pmatrix}
    2(\overline{\xi}-3) & \xi+2 \\ 2(\xi-3) & \overline{\xi}+2
\end{pmatrix}$.

Hence, 

\[
L(LR)^nR=\frac{1}{2^n}\cdot \tilde{P}\cdot  \begin{pmatrix}
\overline{\xi}^n & 0 \\
0 & \xi^n 
\end{pmatrix} \cdot \tilde{\tilde{P}},
\]
where $\tilde{P}=\frac{1}{2}\begin{pmatrix}
    \overline{\xi} & \xi \\ 2 & 2
\end{pmatrix}$ and $\tilde{\tilde{P}}=\frac{1}{10}\begin{pmatrix}
    \overline{\xi}+2 & \xi+2 \\ \xi+2 & \overline{\xi}+2
\end{pmatrix}$.

Thus,
\begin{align*}
\Tr(L(LR)^{n_1}&R\cdots L(LR)^{n_k}R)=\\ &=\frac{1}{10^{k}\cdot 2^{n_1+\cdots + n_k}}\cdot \Tr\Bigg(M\cdot \begin{pmatrix}
    \overline{\xi}^{n_1} & 0 \\ 0 & \xi^{n_1} 
\end{pmatrix}\cdot \, \cdots\, \cdot M\cdot \begin{pmatrix}
    \overline{\xi}^{n_k} & 0 \\ 0 & \xi^{n_k} 
\end{pmatrix} \Bigg),
\end{align*}
where $M=10\cdot \tilde{\tilde{P}}\cdot \tilde{P}=\begin{pmatrix}
    3(\overline{\xi}+2) & 2(\xi+2)\\ 2(\overline{\xi}+2) & 3(\xi+2)
\end{pmatrix}.$ 

Therefore, it is left to study 
\[
\Tr\Bigg(M\cdot \begin{pmatrix}
    \overline{\xi}^{n_1} & 0 \\ 0 & \xi^{n_1} 
\end{pmatrix}\cdot \, \cdots\, \cdot M\cdot \begin{pmatrix}
    \overline{\xi}^{n_k} & 0 \\ 0 & \xi^{n_k} 
\end{pmatrix} \Bigg)=\Tr(\prod_{l=1}^k A_l),
\]
with $A_l=\begin{pmatrix}
    3(\overline{\xi}+2)\overline{\xi}^{n_l} & 2(\xi+2)\xi^{n_l} \\ 2(\overline{\xi}+2)\overline{\xi}^{n_l} & 3(\xi+2)\xi^{n_l} 
\end{pmatrix}=(a_{i_1,i_2}^l)_{i_1,i_2}$.

Write $a_{i_1,i_2}^l=c_{i_1,i_2}\cdot d_{i_2}^l$ for $i_1,i_2=1,2$, and $l=1,\hdots k$,  where 
\[c_{i_1,i_2}=\begin{cases}
    3 \text{ if }i_1=i_2\\
    2 \text{ if }i_1\neq i_2
\end{cases}, \text{ and } d_{i_2}^l=\begin{cases}
    (\overline\xi+2)\overline\xi^{n_l}\text{ if }i_2=1 \\
    (\xi+2)\xi^{n_l}\text{ if }i_2=2.
\end{cases}
\]

Thus, 
\[
\Tr(\prod_{l=1}^k A_l)=\sum_{j_1=1}^2\cdots \sum_{j_k=1}^2 a_{j_1,j_2}^1\cdots a_{j_k,j_1}^k=\sum_{S\subseteq \{1,\hdots,k\}}a_{j_1,j_2}^1\cdots a_{j_k,j_1}^k, 
\]
where $j_i=1$ if $i\in S$ and $j_i=2$ otherwise. 

Hence, 
\begin{align*}
\Tr(\prod_{l=1}^k A_l)&=\sum_{S\subseteq \{1,\hdots,k\}}c_{j_1,j_2}\cdots c_{j_k,j_1}\cdot d_{j_2}^1\cdots d_{j_1}^k\\
&=\sum_{S\subseteq\{1,\hdots,k\}}3^{r(s)}2^{k-r(S)}(\xi+2)^{|S^c|}(\overline\xi+2)^{|S|}\xi^{\sum_{i\in S^c}n_{i-1}}\overline\xi^{\sum_{i\in S}n_{i-1}}\\
&=\sum_{S\subseteq\{1,\hdots,k\}}3^{r(s)}2^{k-r(S)}(\xi+2)^{|S|}(\overline\xi+2)^{|S^c|}\xi^{\sum_{i\in S}n_i}\overline\xi^{\sum_{i\in S^c}n_i},
\end{align*}
where the final equality follows by applying the permutation of $\mathcal{P}(\{1,\hdots,k\})$ that sends $S\subseteq \{1,\hdots,k\}$ to the
set $\{i - 1 : i \in S^c\}$. Note that this permutation preserves $r(S)$. This finishes the proof.
\end{proof}

\begin{remark} \label{rk:spec} Note that, by the above argument, we have proven the explicit description of the simple length spectrum of the modular torus $\mathcal{M}$ in terms of the function $\Phi$ in Theorem \ref{mainthm}. That is, 
\[
\mathcal{SS}(\mathcal{M})=12\{2\,\mathrm{acosh}(3\Phi(\omega)/2))\mid \omega \in \mathcal{N}'\}\cup 6\{2\,\mathrm{acosh}(3\Phi([1])/2))\}\cup 6\{2\mathrm{acosh}(3\Phi([0])/2))\}, 
\]
where again scalar multiples denote multiplicity of the set.    
\end{remark}

The second part of Theorem \ref{mainthm} follows from the work in \cite{Coh71}, who proved that for every Markov number $x\in\mathbb{Z}$, there is a simple closed geodesic $\gamma$ on $\mathcal{M}$ such that \[3x=2\mathrm{cosh}(\ell_\mathcal{M}(\gamma)/2).\]


\begin{thebibliography}{99}

\bibitem[Aig13]{Aig13}
M.~Aigner, \textit{Markov's theorem and 100 years of the uniqueness conjecture: A mathematical journey from irrational numbers to perfect matchings}, Springer, Cham, 2013.

\bibitem[Bar96]{Bar96}
Arthur Baragar, \textit{On the unicity conjecture for Markoff numbers}, Canadian Mathematical Bulletin, \textbf{39}(1):3--9, 1996.

\bibitem[BS88]{BS88}
Peter Buser and Klaus-Dieter Semmler, \textit{The geometry and spectrum of the one holed torus}, Comment. Math. Helv., \textbf{63}(2):259--274, 1988.

\bibitem[Coh71]{Coh71}
Harvey Cohn, \textit{Representation of Markoff's binary quadratic forms by geodesics on a perforated torus}, Acta Arithmetica, \textbf{18}(1):125--136, 1971.

\bibitem[FL24]{FL24}
David Fisac and Mingkun Liu, \textit{Word-length curve counting on the once-punctured torus}, arXiv:2404.09372, 2024.

\bibitem[Fro13]{Fro13}
G.F. Frobenius, \textit{Über die Markoffschen Zahlen}, Königliche Akademie der Wissenschaften, 1913.

\bibitem[GL21]{GL21}
Jonah Gaster and Brice Loustau, \textit{The sum of Lagrange numbers}, Proceedings of the American Mathematical Society, \textbf{149}:5385--5391, 2021. Published electronically: August 20, 2021.

\bibitem[Kon85]{Kon85}
M. Koncevich, \textcyr{\textit{Равномерные расположения}}, available at: \url{https://kvant.mccme.ru/1985/07/ravnomernye_raspolozheniya.htm}, 1985.

\bibitem[McS98]{McS98}
Greg McShane, \textit{Simple geodesics and a series constant over teichmuller space}, Inventiones Mathematicae, \textbf{132}(3):607--632, May 1998.

\bibitem[MP08]{MP08}
Greg McShane and Hugo Parlier, \textit{Multiplicities of simple closed geodesics and hypersurfaces in Teichmüller space}, Geometry \& Topology, \textbf{12}(4):1883--1919, 2008.

\bibitem[MS16]{MS16}
Catherine Meusburger and Carlos Scarinci, \textit{Generalized shear coordinates on the moduli spaces of three-dimensional spacetimes}, Journal of Differential Geometry, \textbf{103}(3):425--474, 2016.

\bibitem[Thu98]{Thu98}
William P. Thurston, \textit{Minimal stretch maps between hyperbolic surfaces}, 1998.

\bibitem[Vui03]{Vui03}
Laurent Vuillon, \textit{Balanced words}, Bulletin of the Belgian Mathematical Society - Simon Stevin, vol. 10, no. 5, pp. 787--805, 2003.

\bibitem[Zha07]{Zha07}
Ying Zhang, \textit{Congruence and uniqueness of certain Markoff numbers}, Acta Arithmetica, \textbf{128}(3):295--301, 2007.

\end{thebibliography}
\end{document}